\newcommand{\NN}{{\mathbb N}}
\journalname{Periodica Mathematica Hungarica}
\newtheorem{Theorem}{Theorem}
\newtheorem{Lemma}{Lemma}
\newtheorem{Corollary}{Corollary}
\newtheorem{Remark}{Remark}
\def \NN{\mathbb N}
\def \Iq{\mathcal I_{<q}}
\def \Icq{\mathcal I_c^{(q)}}
\def \Ieq{\mathcal I_{\leq q}}
\def \I0{\mathcal I_0}
\def \Ilim#1{\mathop{#1\text{\rm--lim}}\limits}
\DeclareMathSymbol{\shortminus}{\mathbin}{AMSa}{"39}
\newcommand\rsetminus{\mathbin{\mathpalette\rsetminusaux\relax}}
\newcommand\rsetminusaux[2]{\mspace{-4mu}
	\raisebox{\rsmraise{#1}\depth}{\rotatebox[origin=c]{-20}{$#1\smallsetminus$}}
	\mspace{-4mu}
}
\newcommand\rsmraise[1]{%
	\ifx#1\displaystyle .8\else
	\ifx#1\textstyle .8\else
	\ifx#1\scriptstyle .6\else
	.45%
	\fi
	\fi
	\fi}
\begin{document}
	
	\title{On $\Iq\shortminus$ and $\Ieq\shortminus$convergence of arithmetic functions%\thanks{Grants or other notes
	%about the article that should go on the front page should be
	%placed here. General acknowledgments should be placed at the end of the article.}
}
%\subtitle{Do you have a subtitle?\\ If so, write it here}

%\titlerunning{Short form of title}        % if too long for running head

\author{J\'anos~T.~T\'oth       \and Ferdin\'and Filip \and J\'ozsef Bukor 
	\and  L\'aszl\'o Zsilinszky
}

%\authorrunning{Short form of author list} % if too long for running head

\institute{
		J\'anos~T.~T\'oth \at
		\email{tothj@ujs.sk}
	\and
	Ferdin\'and Filip \at
	\email{filipf@ujs.sk}           %  \\
	%             \emph{Present address:} of F. Author  %  if needed
	\and
	J\'ozsef Bukor  \at
	\email{bukorj@ujs.sk}
		\\ \\
	Department of Mathematics and Informatics\\
	J. Selye University\\ 945 01 Kom\'arno,
	Slovakia
	\and
	L\'aszl\'o Zsilinszky \at \email{laszlo@uncp.edu}
	\\ \\
		Department of Mathematics and Computer Science\\
		University of North Carolina at Pembroke\\
		Pembroke, NC 28304, U. S. A.\\	
}

\date{Received: date / Accepted: date}
% The correct dates will be entered by the editor

\maketitle

\begin{abstract}
 Let $\NN$ be the set of positive integers, and denote by $$\lambda(A)=\inf\{t>0:\sum_{a\in A} a^{-t}<\infty\}$$ the convergence exponent of $A\subset\NN$. For $0<q\le 1$, $0\le q\le 1$, respectively,   the admissible ideals $\Iq$, $\Ieq$ of all subsets $A\subset \NN$ with $\lambda(A)<q$, $\lambda(A)\le q$, respectively, satisfy $\Iq\subsetneq\Icq\subsetneq \Ieq$, where $$\Icq=\{A\subset\NN: \sum_{a\in A}a^{-q}<\infty\}.$$ In this note we sharpen the results of
 Bal\'a\v z, Gogola and Visnyai from \cite{2}, and other papers, concerning characterizations of $\Icq$\nobreakdash-convergence of various arithmetic functions in terms of $q$. This is achieved by utilizing  $\Iq$\nobreakdash- and $\Ieq$\nobreakdash-convergence, for which new methods and criteria are developed.

	\keywords{$\mathcal{I}$\nobreakdash-convergence\and arithmetic functions\and convergence exponent}
	% \PACS{PACS code1 \and PACS code2 \and more}
	\subclass{MSC 40A35, 11A25}
\end{abstract}

%%%%%%%%%%%%%%%%%%%%%5

\section{Introduction}
Denote by $\NN$ the set of positive integers, and let $\lambda$ be  the convergence exponent function on the power set $2^{\NN}$ of $\NN$, i.e. for $A\subset \NN$ put
\[
\lambda(A)=\inf\Big\{t>0:\sum_{a\in A}\frac{1}{a^t}<\infty\Big\}.
\]
If $q>\lambda(A)$ then $\sum_{a\in A}\frac{1}{a^q}<\infty$, and $\sum_{a\in A}\frac{1}{a^q}=\infty$ when $q<\lambda(A)$; if
$q=\lambda(A)$, the convergence of $\sum_{a\in A}\frac{1}{a^q}$ is inconclusive. It follows from [13, p.26, Exercises 113, 114] that the range of  $\lambda$ is the interval $[0,1]$, moreover for $A=\{a_1<a_2<\cdots<a_n<\dots\}\subset\NN$,
\[
\lambda(A)=\limsup_{n\to\infty}\frac{\log n}{\log a_n}.
\]
It is easy to see that $\lambda$ is monotonic, i.e. $\lambda(A)\leq\lambda(B)$ whenever $A\subset B\subset\NN$, furthermore, $\lambda(A\cup B)=\max\{\lambda(A),\lambda(B)\}$ for all $A,B\subset\NN$.
Define the following sets:
\begin{align*}
\Iq&=\{A\subset\NN:\lambda(A)<q\}, \  \text{if} \ 0<q\leq 1, \\
\Ieq&=\{A\subset\NN:\lambda(A)\leq q\}, \  \text{if} \ 0\leq q\le 1, \ \text{and} \\
\I0&=\{A\subset\NN:\lambda(A)=0\}.
\end{align*}
Clearly, $\mathcal I_{\leq 0}=\I0$, and  $\mathcal I_{\leq 1}=2^\NN$.
Since $\lambda(A)=0$ when $A\subset\NN$ is finite, then  $\mathcal I_f=\{A\subset\NN: \textrm{$A$ is finite}\}\subset\I0$, moreover, also considering the well-known set
\[
\Icq=\Big\{A\subset\NN: \sum_{a\in A}\frac{1}{a^q}<\infty\Big\}
\]
we get that whenever  $0<q<q'<1$,
\begin{equation}
\label{1}
\mathcal I_f\subset\I0\subset\Iq\subset\Icq\subset \Ieq\subset \mathcal I_{<q'}.
\end{equation}
In what follows, we will use the following definitions.

The set $\mathcal I\subseteq 2^\NN$ is a so-called  admissible ideal, provided  $\mathcal I$ is additive (i.e. $A,B\in\mathcal I$ implies $A\cup B\in\mathcal I$), hereditary (i.e. $A\in\mathcal I,\ B\subset A$ implies $B\in\mathcal I$), it contains the singletons, and $\NN\notin\mathcal I $.

Given an ideal $\mathcal I\subset 2^{\NN}$, we say that a sequence $x=(x_n)_{n=1}^\infty$ $\mathcal I$\nobreakdash-converges to a number $L$,	and  write $\Ilim{\mathcal I}  x_n=L$, if for each $\varepsilon >0$ the set \begin{equation}\label{Ae} A_\varepsilon =\{n:|x_n - L|\geq\varepsilon\}
\end{equation}
 belongs to the ideal $\mathcal I$. One can see \cite{7}, \cite{9} for a general treatment of $\mathcal I$\nobreakdash-convergence; a useful property is as follows:
\begin{Lemma}[\cite{9}]
	If $\mathcal I_1\subset\mathcal I_2$, then  $\Ilim{\mathcal I_1} x_n=L$ implies  $\Ilim{\mathcal I_2}  x_n=L$.
\end{Lemma}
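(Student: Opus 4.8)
The plan is to argue directly from the definition of $\mathcal I$\nobreakdash-convergence, since the statement is essentially a monotonicity observation. The key point is that for a fixed sequence $x=(x_n)$ and a fixed candidate limit $L$, the sets $A_\varepsilon=\{n:|x_n-L|\geq\varepsilon\}$ appearing in \eqref{Ae} do not depend on the ideal at all; the ideal enters the definition only through the requirement that each of these sets be a member of it.

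First I would fix an arbitrary $\varepsilon>0$ and form the set $A_\varepsilon$. Using the hypothesis $\Ilim{\mathcal I_1}x_n=L$, I would conclude that $A_\varepsilon\in\mathcal I_1$. Then, invoking the inclusion $\mathcal I_1\subset\mathcal I_2$, I would immediately obtain $A_\varepsilon\in\mathcal I_2$. Since $\varepsilon>0$ was arbitrary, every set of this form lies in $\mathcal I_2$, which is precisely the condition $\Ilim{\mathcal I_2}x_n=L$.

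There is no substantive obstacle here: the result is an immediate consequence of unwinding the definition, and the only thing to keep track of is that the family $\{A_\varepsilon\}_{\varepsilon>0}$ is literally the same in both instances of the definition, so that the set inclusion between the two ideals transfers the membership conditions verbatim. No use of the additive or hereditary structure of the ideals is needed.
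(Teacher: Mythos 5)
Your proof is correct and is precisely the intended argument: the paper states this lemma without proof (citing \cite{9}), and the definitional unwinding you give --- each $A_\varepsilon$ lies in $\mathcal I_1$ by hypothesis, hence in $\mathcal I_2$ by the inclusion --- is the standard and essentially only proof. Your observation that neither additivity nor heredity of the ideals is needed is also accurate.
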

We will study $\mathcal I$\nobreakdash-convergence in the case when $\mathcal I$ stands for $\Iq$, $\Icq$, $\Ieq$, respectively. We will establish necessary and sufficient conditions for a set $A\subset \NN$ to belong to $\Iq$, $\Ieq$, respectively; as well as for the set $A_\varepsilon =\{n:|x_n - L|\geq\varepsilon\}$ so that $\Ilim{\Iq} x_n=L$, resp. $\Ilim{\Ieq} x_n=L$ hold. Note that analogous criteria were not known for $\Icq$.

In this paper, we embed the ideals $\Iq$ and $\Ieq$ into the structure of ideals $\Icq$. We show that theses ideals are essentially distinct. Then we refine a known statement concerning the  $\Icq$-convergence of some arithmetic functions.
A new method is introduced and can be applied widely for consideration of $\Iq$ and $\Ieq$-convergence of sequences.

\section{On ideals enveloping the ideal $\Icq$}

\begin{Theorem}\label{beagyazas} Let $0<q<q'< 1$. Then 
\begin{equation}	
\label{sorba}
\I0\subsetneq\Iq\subsetneq\Icq\subsetneq \Ieq\subsetneq \mathcal I_{<q'}\subsetneq\mathcal I_c^{(q')}\subsetneq \mathcal I_{\leq q'}
\subsetneq\mathcal I_{<1}\subsetneq\mathcal I_c^{(1)}\subsetneq\mathcal I_{\leq 1}=2^\NN.
\end{equation}
\end{Theorem}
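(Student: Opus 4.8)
The plan is to split the proof into two independent tasks: first confirm that every link in \eqref{sorba} is a (possibly non-strict) inclusion, and then, for each of the nine links, exhibit a set witnessing that the inclusion is proper. The weak inclusions are immediate from the definitions together with the stated properties of $\lambda$. Indeed, $\I0\subseteq\Iq$ and the three ``threshold'' inclusions $\Ieq\subseteq\mathcal I_{<q'}$, $\mathcal I_{\leq q'}\subseteq\mathcal I_{<1}$ (as well as $\I0\subseteq\Iq$) follow because a smaller upper bound on $\lambda(A)$ forces a weaker one; $\Iq\subseteq\Icq$ is exactly the implication ``$q>\lambda(A)\Rightarrow\sum_{a\in A}a^{-q}<\infty$''; and $\Icq\subseteq\Ieq$ holds because $\sum_{a\in A}a^{-q}<\infty$ puts $q$ into the defining set of $\lambda(A)$, so $\lambda(A)\le q$. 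Finally $\mathcal I_{\leq 1}=2^\NN$ since the range of $\lambda$ is $[0,1]$. Thus the genuine content is strictness.

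The engine for strictness is a single parametrized construction. For $s\in(0,1]$ and $c\ge 0$ I would take
\[
A(s,c)=\bigl\{\,a_n=\lceil n^{1/s}(\log(n+2))^{c}\rceil:\ n\in\NN\,\bigr\}.
\]
Since $\log a_n=\tfrac1s\log n+O(\log\log n)$, the formula $\lambda(A)=\limsup_n\frac{\log n}{\log a_n}$ yields $\lambda(A(s,c))=s$ for every $c$, while $a_n^{-s}\asymp n^{-1}(\log n)^{-cs}$ shows that $\sum_{a\in A(s,c)}a^{-s}$ diverges for $c=0$ and converges once $cs>1$ (e.g. $c=2/s$). Hence for each $s\in(0,1]$ I obtain one set with $\lambda=s$ and divergent $s$-series, and another with $\lambda=s$ and convergent $s$-series.

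These two families resolve all nine strict links, which fall into three types. The links $\mathcal I_{<s}\subsetneq\mathcal I_c^{(s)}$ at $s=q,q',1$ (namely $\Iq\subsetneq\Icq$, $\mathcal I_{<q'}\subsetneq\mathcal I_c^{(q')}$, $\mathcal I_{<1}\subsetneq\mathcal I_c^{(1)}$) are witnessed by the convergent set $A(s,2/s)$: it lies in $\mathcal I_c^{(s)}$ yet has $\lambda=s\not<s$. The links $\mathcal I_c^{(s)}\subsetneq\mathcal I_{\le s}$ at $s=q,q',1$ are witnessed by the divergent set $A(s,0)$, which satisfies $\lambda=s\le s$ but $\sum a^{-s}=\infty$; at $s=1$ one may simply take $A=\NN$. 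The remaining ``gap'' links $\I0\subsetneq\Iq$, $\Ieq\subsetneq\mathcal I_{<q'}$ and $\mathcal I_{\leq q'}\subsetneq\mathcal I_{<1}$ are witnessed by $A(\alpha,0)$ with $\alpha$ chosen strictly inside the corresponding gap (e.g. $\alpha=q/2$, $\alpha=\tfrac12(q+q')$, $\alpha=\tfrac12(q'+1)$), because $\lambda=\alpha$ then lies strictly between the two thresholds.

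The main obstacle is the asymptotic bookkeeping rather than any conceptual subtlety. One must check that the $a_n$ are eventually strictly increasing, so that $A(s,c)$ is genuinely an increasing enumeration and the $\limsup$ formula applies (for $s<1$ the consecutive gaps tend to infinity, and for $s=1$ with $c>0$ the factor $(\log n)^c$ ensures this), and then carry out the two estimates $\log a_n=\tfrac1s\log n+O(\log\log n)$ and $a_n^{-s}\asymp n^{-1}(\log n)^{-cs}$ carefully enough to pin $\lambda$ exactly at $s$ while controlling convergence at the critical exponent. The endpoint $s=1$ warrants its own check since it sits at the top of the range of $\lambda$; there the convergent witness $A(1,2)$ with $a_n\asymp n(\log n)^2$ and the divergent witness $\NN$ complete the argument.
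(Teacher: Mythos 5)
Your proposal is correct and follows essentially the same route as the paper: the paper's four explicit witnesses ($\lfloor n^{1/s}\rfloor$ for the gap and divergence cases, $\lfloor n^{1/q}\log^{2/q}(n+1)\rfloor+1$ for the convergence case, applied analogously at $q$, $q'$ and $1$) are exactly your family $A(s,c)$ with $c=0$ and $c=2/s$, and the strictness argument via $\lambda(A)=\limsup_n \frac{\log n}{\log a_n}=s$ plus the comparison $\sum a_n^{-s}\asymp\sum n^{-1}(\log n)^{-cs}$ is identical. Your only cosmetic difference is packaging the witnesses as one two-parameter construction instead of four cases.
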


\begin{proof}  

The inclusions follow from the definitions of the sets.
We can show that the difference of successive sets in \eqref{sorba} is infinite, so equality does not hold in any of the inclusions, by considering the following four cases (as usual, $\lfloor x\rfloor$ is the integer part of the real $x$):

%%%%%%%%%%%%%%%%\begin{proof}%[\unskip\nopunct]
\medskip%%%%%%%%%%
{\it Case 1.} $\I0\neq\Iq$: let $0<s<q<1$, and take the set $A=\{a_1<a_2<\cdots\}\subset\NN$, where for all $n\in\NN$,
\[
a_n=\lfloor n^\frac{1}{s}\rfloor.
\]
Then $a_n=n^\frac{1}{s}-\varepsilon(n)$ for some $0\leq\varepsilon(n)<1$, and by Lagrange's Mean Value Theorem for  $f(x)=x^\frac{1}{s}$  on $[n,n+1]$ we get that $a_n<a_{n+1}$ for all $n$. Since
\[
\frac{\log n}{\log a_n} =\frac{\log n}{\frac{1}{s}.\log n+\log \Big(1-\frac{\varepsilon(n)}{n^\frac{1}{s}}\Big)}\to s,\quad\text{if} \ n\to\infty ,
\]
then $0<\lambda(A)=s<q$; thus,  $A\in\Iq\rsetminus\I0$.
It is also clear that $\Iq\rsetminus\I0$ is infinite, since for any $k\in\NN$ the sets
$A_k=\{ka_n: \ n\in\NN\}$ satisfy
\[
\lambda(A_k)=\limsup_{n\to\infty}\frac{\log n}{\log ka_n}=\lambda(A).
\]

{\it Case 2.} $\Iq\neq\Icq$: let $0<q<1$, and take the set $A=\{a_1<a_2<\cdots\}\subset\NN$, where for all $n\in\NN$,
\[
a_n=\lfloor n^\frac{1}{q} \log^\frac{2}{q} (n+1)\rfloor +1.
\]
One can easily show that $(a_n)$ is increasing sequence, and,

\[
 \sum_{n=1}^{\infty} \frac{1}{a_n^q} < \sum_{n=2}^{\infty} \frac{1}{n\log^2 n}  < \infty ,\ \text{thus, } A\in\Icq .
\]
On the other hand
\[
\lim_{n\to\infty}\frac{\log n}{\log a_n} =\lim_{n\to\infty} \frac{\log n}{\log (n^\frac{1}{q}\log^\frac{2}{q} (n+1))}  =\lim_{n\to\infty} \frac{\log n}{\frac1q\log n +\frac{2}{q}\log\log (n+1)} = q,
\]
hence, $\lambda(A)=q$.
Similarly to Case 1 we can see that $\Icq\rsetminus\Iq$ is actually infinite.

{\it Case 3.} $\Icq\neq \Ieq$: let $0<q<1$, define $A=\{a_1<a_2<\cdots\}\subset\NN$, where $a_n=\lfloor n^\frac{1}{q}\rfloor$ for all $n\in\NN$. Then
\[
\sum_{n=1}^{\infty} \frac{1}{a_n^q}\geq\sum_{n=1}^{\infty} \frac{1}{n}=\infty,
\]
so $A\notin\Icq$, but $A\in\Ieq$, since $\lambda(A)=q$.  Analogously to Case 1, one can show that $\Ieq\rsetminus\Icq$ is infinite.

{\it Case 4.} $\Ieq\neq\mathcal I_{<q'}$: it suffices to choose the set $A=\{a_1<a_2<\cdots\}\subset\NN$ such that $a_n=\lfloor n^\frac{1}{s}\rfloor$ for all $n$, where $0<q<s<q'$.
Then $\lambda(A)=s$, so $A\in\mathcal I_{<q'}$, however, $A\notin\Ieq$. Moreover, again,  $\mathcal I_{<q'}\rsetminus\Ieq$ is infinite. \qed
\end{proof}
\medskip%%%%%%%%%%

It is worth noting by \eqref{sorba}, that in order to decide if a given $A\subset \NN$ belongs to  $\Icq$, it may be easier, or more advantageous to first determine the convergence exponent of $A$.
Indeed, if $\lambda(A)<q$, then $A\in\Iq\subset\Icq$, or, if $\lambda(A)=q$ then $A\in\Ieq\subset\mathcal I_c^{(q')}$ for every $q'>q$. This view is important, since in what follows, we will establish criteria for  $\Iq$, $\Ieq$ membership, respectively.

\begin{Theorem} Let $0<q\leq 1$. Then each of the sets $\I0$, $\Iq$, $\Ieq$ forms an admissible ideal, except for $\mathcal I_{\leq 1}$.
\end{Theorem}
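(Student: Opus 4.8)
The plan is to verify the four defining conditions of an admissible ideal---additivity, heredity, containment of the singletons, and $\NN\notin\mathcal I$---for each of $\I0$, $\Iq$ and $\Ieq$, and to isolate the single point at which $\mathcal I_{\leq 1}$ fails. The crucial observation is that all of these families are defined by a threshold condition on the \emph{same} functional $\lambda$, so the first three axioms follow uniformly from the structural properties of $\lambda$ already recorded in the Introduction: monotonicity ($A\subset B\Rightarrow\lambda(A)\le\lambda(B)$), the identity $\lambda(A\cup B)=\max\{\lambda(A),\lambda(B)\}$, and the vanishing of $\lambda$ on finite sets.

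First I would dispatch additivity. If $A,B$ both lie in one of these families---characterized by $\lambda(\cdot)<q$, resp. $\le q$, resp. $=0$---then $\lambda(A\cup B)=\max\{\lambda(A),\lambda(B)\}$ obeys the same bound, since the maximum of two numbers lying below (resp. at most, resp. equal to) a threshold again lies below (resp. at most, resp. equal to) it. Heredity is equally immediate: $B\subset A$ gives $\lambda(B)\le\lambda(A)$ by monotonicity, so any upper bound on $\lambda(A)$ transfers to $\lambda(B)$, and in the $\I0$ case one additionally invokes $\lambda(B)\ge 0$ to force $\lambda(B)=0$. For the singletons, each $\{n\}$ is finite, hence $\lambda(\{n\})=0$, which is $<q$ (using $q>0$), $\le q$, and $=0$ respectively, so every singleton belongs to each family.

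The only axiom that separates the cases---and the real content of the stated exception---is $\NN\notin\mathcal I$. Here I would use $\lambda(\NN)=1$: since $\sum_n n^{-t}$ converges exactly for $t>1$, the infimum defining $\lambda(\NN)$ equals $1$. Consequently $\NN\notin\I0$ (as $1\neq 0$) and $\NN\notin\Iq$ for every $0<q\le 1$ (as $1\not<q$, precisely because $q\le 1$). For $\Ieq$ the membership test reads $1\le q$, which fails exactly when $q<1$; thus $\NN\notin\Ieq$ for $q<1$, while for $q=1$ we have $\lambda(\NN)=1\le 1$, so $\NN\in\mathcal I_{\leq 1}$ and the fourth axiom is violated. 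This exhibits $\mathcal I_{\leq 1}=2^\NN$ as the lone non-ideal on the list.

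I do not anticipate a genuine obstacle, as the whole argument is a direct translation of the threshold definitions through the known algebra of $\lambda$. The one point demanding care is the boundary handling at $q=1$ in the $\Ieq$ family, where the non-strict inequality lets $\NN$ slip in; keeping the strict-versus-non-strict distinction straight across $\Iq$ and $\Ieq$ is the only place where an oversight could arise.
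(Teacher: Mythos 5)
Your proof is correct and follows essentially the same route as the paper, whose proof is a one-line appeal to the properties of $\lambda$ from the Introduction together with the chain \eqref{sorba}; you have simply written out in full the verification that proof sketch intends (additivity from $\lambda(A\cup B)=\max\{\lambda(A),\lambda(B)\}$, heredity from monotonicity, singletons from $\lambda$ vanishing on finite sets). The only cosmetic difference is that you rule out $\NN$ by computing $\lambda(\NN)=1$ directly, whereas the paper gets the same conclusion from the strict inclusions in \eqref{sorba} combined with heredity.
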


\begin{proof} Follows from properties of $\lambda$ listed in the Introduction, along with \eqref{sorba}. \qed
\end{proof}

\begin{Theorem}\label{metszet}
We have
\[
\I0=\bigcap_{0<q\leq 1} \Iq=\bigcap_{0<q\leq 1}\Ieq,
\]
hence,
\[
\I0=\bigcap_{0<q\leq 1}\Icq .
\]
\end{Theorem}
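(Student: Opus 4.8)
The plan is to prove the threefold identity by first establishing the two outer equalities $\I0=\bigcap_{0<q\le 1}\Iq$ and $\I0=\bigcap_{0<q\le 1}\Ieq$ straight from the definition of $\lambda$, and then deducing the middle equality $\I0=\bigcap_{0<q\le 1}\Icq$ by squeezing it between these two via the inclusions already recorded in Theorem~\ref{beagyazas}. The whole argument is an unwinding of definitions together with the monotonicity of intersection; no new machinery is needed.

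For the forward inclusions, if $A\in\I0$ then $\lambda(A)=0$, and this value satisfies both $\lambda(A)<q$ and $\lambda(A)\le q$ for every $q\in(0,1]$; hence $A$ lies in every $\Iq$ and in every $\Ieq$, so in each of the intersections $\bigcap_{0<q\le1}\Iq$ and $\bigcap_{0<q\le1}\Ieq$. For the reverse inclusions, suppose $A\in\bigcap_{0<q\le1}\Iq$, so that $\lambda(A)<q$ for every $q\in(0,1]$. Letting $q\to 0^+$ forces $\lambda(A)\le 0$, and since $\lambda$ is nonnegative (its range is the interval $[0,1]$, as noted in the Introduction), we conclude $\lambda(A)=0$, i.e. $A\in\I0$. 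The identical reasoning, with $\le$ replacing $<$, gives $\bigcap_{0<q\le1}\Ieq\subseteq\I0$. This yields both outer equalities.

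For the final equality, Theorem~\ref{beagyazas} supplies $\Iq\subseteq\Icq\subseteq\Ieq$ for each fixed $q$, and taking intersections over $q$ preserves these inclusions, so that
\[
\bigcap_{0<q\le 1}\Iq\ \subseteq\ \bigcap_{0<q\le 1}\Icq\ \subseteq\ \bigcap_{0<q\le 1}\Ieq .
\]
Both ends equal $\I0$ by the previous paragraph, so the middle intersection is squeezed to $\I0$ as well. The only point requiring any care — and hence the main obstacle, such as it is — is the passage ``$\lambda(A)<q$ for all $q>0$ implies $\lambda(A)=0$,'' which relies on the nonnegativity of $\lambda$; everything else is purely formal.
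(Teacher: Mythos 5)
Your proof is correct and follows essentially the same route as the paper, whose own proof is just the one-line remark that the result ``follows from the definitions of $\I0$, $\Iq$, $\Ieq$, and \eqref{sorba}'': you have simply written out in full the definitional argument for the outer equalities (using the nonnegativity of $\lambda$) and the sandwich via the inclusions $\Iq\subset\Icq\subset\Ieq$ for the middle one. Nothing to add.
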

\begin{proof} Follows from the definitions of $\I0$, $\Iq$, $\Ieq$,  and \eqref{sorba}. \qed
\end{proof}

\section{ Conditions for a set $A$ to belong to $\Iq$, $\Ieq$}

Given $x\ge 1$, define the counting function of $A\subset \NN$ as
$$A(x)=\#\{a\leq x: a\in A \}.$$
 We have

\begin{Theorem}\label{tAxqd}
	Let $0\leq q<1$ be a real number, and $A\subset\NN$. Then
$A\in\Ieq$ if and only if for every $\delta>0$
\begin{equation}
\label{Axqd}
\lim_{x\to\infty}\frac{A(x)}{x^{q+\delta}}= 0 .
\end{equation}
 \end{Theorem}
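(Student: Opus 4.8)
The plan is to reduce the claim to a single identity linking the convergence exponent to the counting function, namely
\[
\lambda(A)=\limsup_{x\to\infty}\frac{\log A(x)}{\log x},
\]
valid for every infinite $A=\{a_1<a_2<\cdots\}\subset\NN$. The finite (and empty) cases are trivial: then $A(x)$ is eventually constant, so $\lambda(A)=0\le q$ and $A(x)/x^{q+\delta}\to 0$ for every $\delta>0$, so \eqref{Axqd} holds; hence $A\in\Ieq$ and the equivalence is vacuously true. To establish the identity for infinite $A$ I would start from the formula $\lambda(A)=\limsup_{n\to\infty}\frac{\log n}{\log a_n}$ recalled in the Introduction. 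Since $A(a_n)=n$ and $A(x)=n$ is constant on each half-open interval $[a_n,a_{n+1})$, on that interval the quotient $\frac{\log A(x)}{\log x}=\frac{\log n}{\log x}$ is nonincreasing in $x$, so its supremum is attained at the left endpoint $x=a_n$ and equals $\frac{\log n}{\log a_n}$. Passing to the limit superior over all real $x\to\infty$ therefore yields the same value as the limit superior over the subsequence $x=a_n$, which is exactly $\lambda(A)$.

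With the identity in hand the forward implication is short. Assume $\lambda(A)\le q$ and fix $\delta>0$. Then $\limsup_{x\to\infty}\frac{\log A(x)}{\log x}=\lambda(A)\le q$, so with $\varepsilon=\tfrac{\delta}{2}>0$ there is an $x_0$ with $\frac{\log A(x)}{\log x}<q+\tfrac{\delta}{2}$, i.e. $A(x)<x^{\,q+\delta/2}$, for all $x\ge x_0$. Consequently
\[
0\le\frac{A(x)}{x^{q+\delta}}<x^{-\delta/2}\longrightarrow 0\quad(x\to\infty),
\]
which is precisely \eqref{Axqd}.

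For the converse I would argue by contraposition. Suppose $\lambda(A)>q$ and choose $\delta>0$ so small that $q+\delta<\lambda(A)$. By the identity, the limit superior $\limsup_{x\to\infty}\frac{\log A(x)}{\log x}=\lambda(A)$ exceeds $q+\delta$, so the inequality $\frac{\log A(x)}{\log x}>q+\delta$ holds along some sequence $x_k\to\infty$. For each such $x_k$ this gives $A(x_k)>x_k^{\,q+\delta}$, whence $\frac{A(x_k)}{x_k^{\,q+\delta}}>1$. Thus \eqref{Axqd} fails for this particular $\delta$, and the contrapositive shows that validity of \eqref{Axqd} for every $\delta>0$ forces $\lambda(A)\le q$, i.e. $A\in\Ieq$.

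The main obstacle is the first step: establishing $\lambda(A)=\limsup_{x\to\infty}\frac{\log A(x)}{\log x}$ rigorously. The delicate point is the transfer from the discrete limit superior (over $n$, through the values $a_n$) to the continuous one (over real $x$); this rests on the monotonicity of $\frac{\log n}{\log x}$ on each step interval, so that the supremum over $x\ge X$ is controlled by the left endpoints and the partial first interval contributes a term $\frac{\log m}{\log X}$ that is dominated as $X\to\infty$. Once this bridge is secured, the two implications are merely the exponentiation of a logarithmic inequality, and the splitting of $\delta$ into $\delta/2$ supplies both the decay in the forward direction and the room to pick $\delta$ in the converse.
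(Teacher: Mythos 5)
Your proof is correct and follows essentially the same route as the paper: both arguments rest on the formula $\lambda(A)=\limsup_{n\to\infty}\frac{\log n}{\log a_n}$ together with the observation that $A(x)=n$ on $[a_n,a_{n+1})$, so that control of the ratio at the points $a_n$ transfers to all large real $x$ (forward direction), while the converse only requires evaluating the hypothesis along the subsequence $x=a_n$. Your packaging of this transfer as the single identity $\lambda(A)=\limsup_{x\to\infty}\frac{\log A(x)}{\log x}$, with the converse done by contraposition and the finite case treated explicitly, is a clean modularization of the paper's argument rather than a genuinely different one.
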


\begin{proof}
Let $A=\{a_1<a_2<\dots \}$, and $A\in\Ieq$. Then
\[
\lambda(A) =\limsup_{n\to\infty}\frac{\log n}{\log a_n}\leq q,
\]
so for any $\delta > 0$ there is an $n_0\in\NN$ so that for all $n\geq n_0$
\[
\frac{\log n}{\log a_n}\leq q+\frac{\delta}{2}, \ \text{thus}  \ A(a_n)=n\leq a_n ^{q+\frac{\delta}{2}}.
\]
If $x$ is sufficiently large,  we can find $n\geq n_0$ with $a_n\leq x<a_{n+1}$, hence, $A(x)=n\leq x^{q+\frac{\delta}{2}}$. Consequently,
\[
0\leq \frac{A(x)}{x^{q+\delta}} \leq \frac{x^{q+\frac{\delta}{2}}}{x^{q+\delta}}=\frac{1}{x^{\frac{\delta}{2}}}\to 0, \quad\text{as} \ x\to\infty,
\]
which implies \eqref{Axqd} for every $\delta >0$.

Conversely, let $\delta >0$, and \eqref{Axqd} be true for some  $A=\{a_1<a_2<\dots \}$. Then
\[
\frac{A(a_n)}{a_n ^{q+\delta}}\to 0, \quad \text{as} \ n\to\infty,
\]
so there is  $n_1\in\NN$ such that for all $n\geq n_1$,  $n\leq a_n ^{q+\delta}$, thus,
\[
\frac{\log n}{\log a_n}\leq \frac{(q+\delta)\log a_n}{\log a_n} = q+\delta.
\]
Then for all $\delta >0$, $\lambda(A)\leq q+\delta$, hence, letting $\delta\to 0$, we get $\lambda(A)\leq q$, so, $A\in\Ieq$. \qed
\end{proof}

The definition of $\Ieq$\nobreakdash-convergence immediately yields
\begin{Corollary}
	Let $0\leq q<1$, $\varepsilon >0$, $L$ and $x_n$ be  real numbers for all $n\in\NN$, and  $ A_\varepsilon =\{n:|x_n - L|\geq\varepsilon \}$. Then $\Ilim{\Ieq} x_n=L$ if and only if for every $\varepsilon >0$ and $\delta >0$
$$	\lim_{x\to\infty}\frac{A_\varepsilon (x)}{x^{q+\delta}}= 0 .$$
\end{Corollary}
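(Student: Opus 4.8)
The plan is to read the statement off directly from the definition of $\Ieq$\nobreakdash-convergence, with Theorem~\ref{tAxqd} supplying the only nontrivial ingredient. By the definition of $\mathcal I$\nobreakdash-convergence together with \eqref{Ae}, the assertion $\Ilim{\Ieq} x_n=L$ means precisely that for each $\varepsilon>0$ the level set $A_\varepsilon=\{n:|x_n-L|\geq\varepsilon\}$ belongs to the ideal $\Ieq$. Thus the whole task reduces to rewriting the single membership condition ``$A_\varepsilon\in\Ieq$'' in analytic form, and this is exactly what Theorem~\ref{tAxqd} provides for an arbitrary subset of $\NN$.

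First I would fix an arbitrary $\varepsilon>0$ and regard $A_\varepsilon$ as a subset $A\subset\NN$. Since $0\le q<1$, Theorem~\ref{tAxqd} applies verbatim with $A_\varepsilon$ in place of $A$, yielding the equivalence
\[
A_\varepsilon\in\Ieq \iff \lim_{x\to\infty}\frac{A_\varepsilon(x)}{x^{q+\delta}}=0 \ \text{ for every } \delta>0 .
\]
Here the counting function $A_\varepsilon(x)=\#\{a\le x:\ a\in A_\varepsilon\}$ is well defined regardless of whether $A_\varepsilon$ is finite or infinite, so no case distinction is needed; should $A_\varepsilon$ happen to be finite, the right-hand side holds trivially, consistent with $A_\varepsilon\in\I0\subset\Ieq$.

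Finally I would reinstate the universal quantifier over $\varepsilon$. Chaining the two equivalences gives
\[
\Ilim{\Ieq} x_n=L \iff \big(\forall\varepsilon>0:\ A_\varepsilon\in\Ieq\big) \iff \Big(\forall\varepsilon>0\ \forall\delta>0:\ \lim_{x\to\infty}\tfrac{A_\varepsilon(x)}{x^{q+\delta}}=0\Big),
\]
which is exactly the claim. I do not expect any genuine obstacle here: all the analytic content lives in Theorem~\ref{tAxqd}, and the only point worth keeping in mind is that the two quantifiers have distinct origins — $\varepsilon$ comes from the definition of $\mathcal I$\nobreakdash-convergence, while $\delta$ comes from the characterization of the ideal $\Ieq$. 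They are therefore logically independent and can simply be concatenated, with no interchange of limits or quantifiers required.
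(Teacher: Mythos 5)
Your proposal is correct and matches the paper's approach exactly: the paper states that the corollary is ``immediately'' yielded by the definition of $\Ieq$\nobreakdash-convergence combined with Theorem~\ref{tAxqd}, which is precisely your argument of unfolding the definition to the membership condition $A_\varepsilon\in\Ieq$ and then invoking Theorem~\ref{tAxqd} for each fixed $\varepsilon>0$. Your added remarks on the finite case and the independence of the two quantifiers are sound, if more detail than the paper bothers to give.
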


\begin{Theorem}\label{tAxkisebbq-d}
	Let $0<q\leq 1$ be a real number, and $A\subset\NN$. Then
	$A\in\Iq$ if and only if there exists $\delta>0$ such that
	\begin{equation}
	\label{Axq-d}
	\lim_{x\to\infty}\frac{A(x)}{x^{q-\delta}}= 0 .
	\end{equation}
\end{Theorem}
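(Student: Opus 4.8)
The plan is to follow the template of the proof of Theorem~\ref{tAxqd}, but to exploit the two structural differences between the $\Ieq$ and $\Iq$ cases: here the defining inequality $\lambda(A)<q$ is \emph{strict}, and correspondingly the quantifier on $\delta$ switches from ``for every'' to ``there exists''. These two changes are linked, and tracking that link is the crux of the argument.

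For the forward implication I would suppose $A=\{a_1<a_2<\cdots\}\in\Iq$, so that $s:=\lambda(A)<q$. The strictness of this inequality is precisely what furnishes room to choose $\delta$: fix any $r$ with $s<r<q$, put $\delta:=q-r>0$, and pick an intermediate exponent $t$ with $s<t<r$. Since
\[
\limsup_{n\to\infty}\frac{\log n}{\log a_n}=s<t,
\]
there is an $n_0$ with $\frac{\log n}{\log a_n}\le t$, hence $A(a_n)=n\le a_n^{\,t}$, for all $n\ge n_0$. For large $x$ I then choose $n\ge n_0$ with $a_n\le x<a_{n+1}$, so that monotonicity of the counting function gives $A(x)=n\le a_n^{\,t}\le x^{\,t}$, whence
\[
0\le\frac{A(x)}{x^{q-\delta}}=\frac{A(x)}{x^{\,r}}\le x^{\,t-r}\to 0,\qquad x\to\infty,
\]
because $t-r<0$. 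This establishes \eqref{Axq-d} for this particular $\delta$.

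For the converse I would assume \eqref{Axq-d} for some $\delta>0$ and evaluate the limit along the subsequence $x=a_n$, obtaining $\tfrac{n}{a_n^{\,q-\delta}}\to 0$. Thus there is an $n_1$ with $n\le a_n^{\,q-\delta}$ for all $n\ge n_1$; taking logarithms (the $a_n$ being eventually larger than $1$) yields $\frac{\log n}{\log a_n}\le q-\delta$, and passing to the upper limit gives $\lambda(A)\le q-\delta<q$, i.e. $A\in\Iq$.

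I expect no serious obstacle, since both directions are short once the quantifiers are set up correctly; the only point demanding genuine care is the choice of exponents in the forward direction. This is exactly where the strict inequality $\lambda(A)<q$ is used: the gap between $s$ and $q$ is what permits a single fixed $\delta$ (equivalently, a fixed exponent $q-\delta$ bounded away from $\lambda(A)$) to work, in contrast to the $\Ieq$ case of Theorem~\ref{tAxqd}, where $\lambda(A)$ may equal $q$ and one is forced to let $\delta$ range over all positive values.
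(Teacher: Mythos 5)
Your proof is correct and follows essentially the same route as the paper's: your forward direction is the paper's argument with the gap below $q$ parametrized by exponents $\lambda(A)<t<r=q-\delta<q$ instead of the paper's choice $\delta<\tfrac12\big(q-\lambda(A)\big)$ with exponents $q-2\delta<q-\delta$. The only organizational difference is in the converse, where the paper cites Theorem~\ref{tAxqd} together with the inclusion chain \eqref{sorba} to conclude $A\in\mathcal I_{\leq q-\delta}\subset\Iq$, whereas you unfold that citation into the direct computation along the subsequence $x=a_n$ --- which is precisely the argument proving the cited theorem.
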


\begin{proof} Let   $A\in\Iq$. Then
\[
\lambda(A) =\limsup_{n\to\infty}\frac{\log n}{\log a_n}< q, \quad\text{where} \ A=\{a_1<a_2<\dots\}.
\]
For each $\delta >0$ with $0<\delta<\frac{1}{2}(q-\lambda(A))$ there is  $n_0\in\NN$ so that for all $n\geq n_0$,
\[
\frac{\log n}{\log a_n}\leq q-2\delta, \quad \text{thus,} \ n\leq a_n ^{q-2\delta},
\]
hence, for all $n\geq n_0$,
\[
A(a_n)=n\leq a_n ^{q-2\delta}.
\]
If $x$ is large enough, there exists some $n\geq n_0$ with $a_n\leq x<a_{n+1}$, so $A(x)=n\leq x^{q-2\delta}$. This implies
\[
0\leq \frac{A(x)}{x^{q-\delta}} \leq \frac{x^{q-2\delta}}{x^{q-\delta}}=\frac{1}{x^{\delta}}\to 0, \quad\text{as} \ x\to\infty,
\]
and \eqref{Axq-d} follows.

Conversely, let $\delta >0$ be such that \eqref{Axq-d} is true. Then by Theorem \ref{beagyazas}  and Theorem \ref{tAxqd}

$$A\in\mathcal I_{\leq q-\delta}\subset I_{< q}.$$
\qed
%%%%%%%%
%%%%%%%%

\end{proof}

The definition of $\Iq$\nobreakdash-convergence immediately yields
\begin{Corollary}
Let $0<q\leq 1$, $\varepsilon >0$, $L$ and $x_n$ be real numbers for all $n\in\NN$, and  $ A_\varepsilon =\{n:|x_n - L|\geq\varepsilon \}$. Then
$\Ilim{\Iq} x_n=L$ if and only if for every $\varepsilon >0$ there exists $\delta >0$ such that
		$$	\lim_{x\to\infty}\frac{A_\varepsilon (x)}{x^{q-\delta}}= 0 .$$
\end{Corollary}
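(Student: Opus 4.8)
The plan is to derive this statement directly by combining the definition of $\mathcal I$\nobreakdash-convergence, specialized to the ideal $\Iq$, with the membership criterion supplied by Theorem~\ref{tAxkisebbq-d}. First I would unwind the definition recorded in \eqref{Ae}: the assertion $\Ilim{\Iq} x_n=L$ means precisely that for every $\varepsilon>0$ the set $A_\varepsilon=\{n:|x_n-L|\geq\varepsilon\}$ lies in the ideal $\Iq$.

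Next I would apply Theorem~\ref{tAxkisebbq-d} to the set $A_\varepsilon$ for each fixed $\varepsilon>0$. That theorem characterizes membership in $\Iq$ exactly: $A_\varepsilon\in\Iq$ if and only if there exists some $\delta>0$ with $\lim_{x\to\infty}A_\varepsilon(x)/x^{q-\delta}=0$. Substituting this equivalence into the previous step yields the desired biconditional, namely that $\Ilim{\Iq} x_n=L$ holds if and only if for every $\varepsilon>0$ there is a $\delta>0$ for which the displayed limit vanishes.

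The only point that warrants care---and the one I would flag as the crux, modest though it is---is the order of the quantifiers. Because the characterization of $\Iq$ in Theorem~\ref{tAxkisebbq-d} is itself existential in $\delta$, the exponent $\delta$ produced for each $A_\varepsilon$ is permitted to depend on $\varepsilon$; different thresholds $\varepsilon$ may call for different exponents $\delta$. This is faithfully reflected in the statement, where ``there exists $\delta>0$'' sits inside the scope of ``for every $\varepsilon>0$'', and contrasts with the $\Ieq$ case of the preceding corollary, where $\delta$ is instead quantified universally. No further estimates are needed, since all the analytic work has already been carried out in the proof of Theorem~\ref{tAxkisebbq-d}.
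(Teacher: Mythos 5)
Your proof is correct and matches the paper exactly: the paper states this Corollary as an immediate consequence of the definition of $\Iq$\nobreakdash-convergence combined with Theorem~\ref{tAxkisebbq-d} applied to each $A_\varepsilon$, which is precisely your argument. Your remark on the quantifier order (that $\delta$ may depend on $\varepsilon$, in contrast with the $\Ieq$ corollary) is a worthwhile clarification of why the statement reads as it does.
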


As an application of the above results, we will show that an important number-theoretic set belongs to the smallest element of  \eqref{sorba}, namely $\I0$:

\begin{Lemma}
	Given $k\in\NN$, and arbitrary primes $p_1<p_2<\cdots <p_k$, denote
	\[
	D(p_1,p_2\dots,p_k)=\{n\in\mathbb N: n=p_1^{\alpha_1}p_2^{\alpha_2}\cdots p_k^{\alpha_k}, \alpha_i\geq 0, i=1,2,\dots, k\}\,.
	\]
Then
	\[
	D(p_1,p_2\dots,p_k)\in \I0\,.
	\]
\end{Lemma}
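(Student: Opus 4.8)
The goal is to show $\lambda(D(p_1,\dots,p_k))=0$, equivalently $D(p_1,\dots,p_k)\in\I0$. Writing $D=D(p_1,\dots,p_k)$ for brevity and $D(x)$ for its counting function, the plan is to exploit the fact that $\I0=\mathcal I_{\leq 0}$ and to invoke Theorem~\ref{tAxqd} with $q=0$. By that criterion, the membership $D\in\I0$ holds if and only if for every $\delta>0$ one has $\lim_{x\to\infty} D(x)/x^\delta=0$. Thus the entire problem reduces to a growth estimate for $D(x)$.

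The key step is to bound $D(x)$. If $n=p_1^{\alpha_1}\cdots p_k^{\alpha_k}\leq x$, then taking logarithms gives $\alpha_i\log p_i\leq\log x$, hence $\alpha_i\leq \log x/\log p_i\leq \log x/\log 2$ for each $i$, since $p_i\geq 2$. Each exponent therefore ranges over at most $1+\log x/\log 2$ values, and so
\[
D(x)\leq\Big(1+\frac{\log x}{\log 2}\Big)^{k}=O\big((\log x)^k\big).
\]
Because a fixed power of $\log x$ is eventually dominated by any positive power of $x$, for every $\delta>0$ we obtain $D(x)/x^\delta\to 0$ as $x\to\infty$, and Theorem~\ref{tAxqd} yields $D\in\I0$.

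I do not expect a genuine obstacle here: the only real content is the polylogarithmic bound on $D(x)$, which is elementary, and the rest is the application of the already-established criterion. As an even more direct alternative that bypasses the counting function entirely, one can compute, for every $t>0$, the defining sum by factoring it as a finite product of convergent geometric series,
\[
\sum_{n\in D}\frac{1}{n^t}=\prod_{i=1}^{k}\sum_{\alpha=0}^{\infty}\frac{1}{p_i^{\alpha t}}=\prod_{i=1}^{k}\frac{1}{1-p_i^{-t}}<\infty,
\]
which is finite precisely because $p_i\geq 2$ forces $p_i^{-t}<1$ for each $i$. As this holds for every $t>0$, the definition of $\lambda$ gives $\lambda(D)=0$ immediately. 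Either route settles the claim; I would likely present the first, since the Lemma is advertised as an application of Theorem~\ref{tAxqd}.
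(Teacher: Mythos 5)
Your main argument is essentially identical to the paper's proof: the paper also bounds the counting function by $\prod_{i=1}^{k}\bigl(\frac{\log x}{\log p_i}+1\bigr)\leq\bigl(\frac{2}{\log 2}\log x\bigr)^{k}$ (citing an exercise in Nathanson's book rather than deriving it, as you do, from $\alpha_i\leq\log x/\log p_i$) and then applies Theorem~\ref{tAxqd} with $q=0$. Your alternative Euler-product computation $\sum_{n\in D}n^{-t}=\prod_{i=1}^{k}(1-p_i^{-t})^{-1}<\infty$ for all $t>0$ is a correct and even more direct route not taken by the paper, but since you would present the counting argument, the proposal matches the paper's approach.
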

\begin{proof}
	For a number $x\geq2$ denote
	\[
	D(p_1,p_2\dots,p_k)(x)=\#\{n\leq x:n\in D(p_1,p_2\dots,p_k)\} .
	\]
	Then by [11, p.37, Exercise 15] we have
	\[
D(p_1,p_2\dots,p_k)(x)\leq\prod_{i=1}^{k} \Big (\frac{\log x}{\log p_i}+1 \Big )\leq \Big(\frac{2}{\log 2}\log x\Big)^k\,.
\]
From this, by Theorem \ref{tAxqd} for $q=0$ we get
\[
D(p_1,p_2\dots,p_k)\in \I0.  
\eqno\qed 
\]
\end{proof}

\section{On $\Iq$\nobreakdash- and $\Ieq$\nobreakdash-convergence of arithmetic functions}

First we recall some arithmetic functions, which we will investigate with respect to $\Iq$\nobreakdash-  and $\Ieq$\nobreakdash-convergence. We refer to the papers  \cite{2}, \cite{6}, \cite{10}, \cite{12}, \cite{14}, \cite{16}, \cite{17}, \cite{18} for definitions and properties of these functions.

Let $n=p_{1}^{\alpha_{1}}\cdot p_{2}^{\alpha_{2}}\cdots p_{k}^{\alpha_{k}}$ be the canonical representation of  $n\in \NN$. Define

\begin{itemize}
	\renewcommand\labelitemi{$\bullet$}
	\item $\omega(n)$ - the number of distinct prime factors of $n$ (i.e. $\omega(n)=k$),
	\item $\Omega(n)$ - the number of prime factors of $n$ counted with multiplicities (i.e. $\Omega(n)=\alpha_{1}+\dotsb+\alpha_{k}$),
	\item for $n>1$,
	\begin{equation*}
	h(n)=\min_{1\leq j \leq k}\alpha_{j},\quad H(n)=\max_{1\leq j \leq k}\alpha_{j}
	\end{equation*}
	and $h(1)=1$, $H(1)=1$, 
	\item $f(n)=\prod_{d\mid n} d$, and $f^{*}(n)=\frac{1}{n}f(n)$,
	\item $a_{p}(n)$ as follows: $a_{p}(1)=0$, and if $n>0$, then $a_{p}(n)$ is the unique integer $j\geq 0$ satisfying $p^{j}\mid n$, but $p^{j+1}\nmid n$ i.~e., $p^{a_{p}(n)}\Vert n$.
	\item $\gamma (n)$ - the number of all representations of a natural number $n$ in the form $n=a^b$, where $a, b$ are positive integers (see \cite{10}). Let
	\[
n=a_1^{b_1}=a_2^{b_2}=\dots=a_{\gamma (n)}^{b_{\gamma (n)}}
\]
	be all such representations of a given $n$, where $a_i, b_i\in\NN$.
	\item for $n>1$,
	\[
\tau(n)= b_1+b_2+\dots +b_{\gamma (n)},
\]
	\item $N(n)$ - the number of times the positive integer $n$ occurs in Pascal's triangle (see \cite{1} and \cite{17}).
\end{itemize}

Recall that $\Icq$\nobreakdash-convergence of the following sequences has been established in \cite{2}, \cite{3}, \cite{5}:

\begin{itemize}
\item[I.]  For $0<q\leq 1$ we have $\Ilim{\Icq}  \frac{h(n)}{\log n} = 0$  (see \cite[Th.8]{2}),
\item[II.] Only for $q=1$ we have $\Ilim{\Icq} \frac{H(n)}{\log n} = 0$ (see \cite[Th.10, Th.11]{2}),
\item[III.] For a prime number $p$ the sequence $\big((\log p) \frac{a_p (n)}{\log n}\big)_{n=2}^\infty$ is $\Icq$\nobreakdash-convergent to $0$ only for $q=1$ (see \cite[Th.2.3]{3}),
\item[IV.] For $q>\frac{1}{2}$ we have $\Ilim{\Icq}  \gamma (n) = 1$, and  for $0<q\leq\frac{1}{2}$ the sequence $\gamma (n)$ is not $\Icq$\nobreakdash-convergent (see \cite[Cor.3.5]{3}),
\item[V.] For $q>\frac{1}{2}$ we have $\Ilim{\Icq}  \tau(n) = 1$, and for $0<q\leq\frac{1}{2}$ the sequence $\tau(n)$ is not $\Icq$\nobreakdash-convergent (see \cite[Cor.3.8]{3}),
\item[VI.] For $q>\frac{1}{2}$ we have $\Ilim{\Icq}  N(n) = 2$, and  for $0<q\leq\frac{1}{2}$ the sequence $\big(N(n)\big)_{t=1}^{\infty}$ is not $\Icq$\nobreakdash-convergent (see \cite[Th.2.2]{5}),
\item[VII.] The sequences $\big(\frac{\omega(n)}{\log \log n}\big)_{n=2}^\infty$ and $\big(\frac{\Omega(n)}{\log \log n}\big)_{n=2}^\infty$ are not $\Icq$\nobreakdash-convergent for all $0<q\leq 1$ (see \cite[Th.12]{2}),
\item[VIII.] The sequences $\big(\frac{\log\log f(n)}{\log\log n}\big)$ and $\big(\frac{\log\log f^{*}(n)}{\log\log n}\big)$ are not $\Icq$\nobreakdash-convergent for all $0<q\leq 1$ (see \cite[Th.13, Th.14]{2}).
\end{itemize}

In what follows, we will improve and sharpen all the statements I--VIII via the best convergences  one can obtain from the ideals in  \eqref{sorba}  that are within $\Iq$, $\Ieq$.

The next theorem, which is readily implied by Theorem \ref{metszet} and \cite[Th.8]{2},   gives statement I. using Theorem \ref{beagyazas} and Lemma 1.
 We will, however, provide another simpler proof based on Lemma 2:

\begin{Theorem}
	We have
	\[
	\Ilim{\I0} \frac{h(n)}{\log n}=0\,.
	\]
\end{Theorem}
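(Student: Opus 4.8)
The plan is to peel off the definition of $\mathcal{I}_0$\nobreakdash-convergence and reduce the claim to a single heredity statement about the $D$\nobreakdash-sets of Lemma~2. Since $\frac{h(n)}{\log n}\ge 0$ for every $n\ge 2$, to prove $\Ilim{\I0}\frac{h(n)}{\log n}=0$ it suffices to show that for each $\varepsilon>0$ the (one-sided) level set
$$A_\varepsilon=\{n\ge 2: h(n)\ge\varepsilon\log n\}$$
belongs to $\mathcal{I}_0$.

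The key observation I would make is that membership in $A_\varepsilon$ forces $n$ to be composed only of small primes. Indeed, if $n=p_1^{\alpha_1}\cdots p_k^{\alpha_k}\in A_\varepsilon$, then $p_i^{\alpha_i}\le n$ gives $\alpha_i\log p_i\le\log n$ for each $i$; combining this with $\alpha_i\ge h(n)\ge\varepsilon\log n$ yields $\varepsilon\log n\cdot\log p_i\le\log n$, that is, $\log p_i\le 1/\varepsilon$, so $p_i\le e^{1/\varepsilon}$. Thus every prime factor of every $n\in A_\varepsilon$ lies in the finite set $\{p_1,\dots,p_r\}$ of all primes not exceeding $e^{1/\varepsilon}$, whence $A_\varepsilon\subseteq D(p_1,\dots,p_r)$ (and if this prime set is empty, $A_\varepsilon$ is empty among $n\ge 2$, so the conclusion is trivial). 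By Lemma~2 we have $D(p_1,\dots,p_r)\in\mathcal{I}_0$, and since $\mathcal{I}_0$ is hereditary, $A_\varepsilon\in\mathcal{I}_0$. As $\varepsilon>0$ was arbitrary, the theorem follows.

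In this approach there is essentially no analytic obstacle: the whole argument rests on the elementary exponent bound $p_i\le e^{1/\varepsilon}$, after which Lemma~2 does all the work, so the ``hard part'' is really just spotting that $h(n)\ge\varepsilon\log n$ caps every prime factor of $n$. The only points needing minor care are the boundary value $n=1$ (excluded, since $\log 1=0$, and a single point lies in $\mathcal{I}_0$ anyway) and checking that the prime set is finite. Should one wish to avoid Lemma~2, an alternative is to invoke Theorem~\ref{tAxqd} with $q=0$ and estimate $A_\varepsilon(x)$ directly via the standard bound $O(x^{1/m})$ for the count of $m$\nobreakdash-full integers up to $x$, with $m$ growing like $\varepsilon\log x$ on the relevant range; there the delicate step is controlling the $m$\nobreakdash-dependence of the implied constant against the target power $x^\delta$. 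The smoothness argument is decidedly cleaner, and is the route I would take.
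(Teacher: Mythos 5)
Your proof is correct and follows essentially the same route as the paper's: both arguments use the bound $p^{h(n)}\le n$ for each prime factor $p$ of $n$ to show that every $n\in A_\varepsilon$ has all its prime factors below a fixed bound (your $e^{1/\varepsilon}$ is exactly the paper's threshold $\frac{1}{\log p}<\varepsilon$, read contrapositively), so that $A_\varepsilon\subseteq D(p_1,\dots,p_r)$, and then both conclude by Lemma~2 and the hereditary property of $\I0$.
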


\begin{proof} Take a small $\varepsilon>0$, and the largest prime  $p_0$ for which $\frac1{\log p_0}\geq \varepsilon$. Then $\frac1{\log p}< \varepsilon$ whenever $p>p_0$, so if $n\in\NN$ is such that $p|n$ for some prime  $p>p_0$, then $n\geq p^{h(n)}$. It follows that
	\[
	\frac{h(n)}{\log n}\leq \frac{h(n)}{\log p^{h(n)}}=\frac1{\log p}<\varepsilon\,,
	\]
	thus,
\[n\notin \Big\{k\in\mathbb N:\frac{h(k)}{\log k}\geq \varepsilon\Big\}=\Big\{k\in\mathbb N:\Big|\frac{h(k)}{\log k}-0\Big|\geq \varepsilon\Big\}=A_\varepsilon.
\]
This implies $A_\varepsilon\subset D(2,3,5,\dots, p_0)$, so, by  Lemma 2 and the hereditary property, $A_\varepsilon\in\I0$. \qed
\end{proof}

Statement II. has the following strengthening:

\begin{Theorem}
	We have
	\[
	\Ilim{\mathcal I_{<1}} \frac{H(n)}{\log n}=0\,.
	\]
\end{Theorem}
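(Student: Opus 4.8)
The plan is to verify $\mathcal I_{<1}$-convergence straight from the definition, i.e.\ to show that for every $\varepsilon>0$ the set
\[
A_\varepsilon=\Big\{n\in\NN:\Big|\frac{H(n)}{\log n}-0\Big|\geq\varepsilon\Big\}=\Big\{n\in\NN:\frac{H(n)}{\log n}\geq\varepsilon\Big\}
\]
belongs to $\mathcal I_{<1}$. By Theorem \ref{tAxkisebbq-d} applied with $q=1$, this reduces to producing a single $\delta>0$ for which $A_\varepsilon(x)/x^{1-\delta}\to0$; equivalently, it suffices to bound the counting function by $A_\varepsilon(x)=O(x^{1-\delta})$ for some $\delta>0$ depending on $\varepsilon$, which yields $\lambda(A_\varepsilon)\le 1-\delta<1$.

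First I would record the structural fact that drives the estimate, paralleling the role of $p_0$ in the preceding theorem. If $n\in A_\varepsilon$ and $p$ is the prime realizing the maximal exponent, so that $p^{H(n)}\mid n$, then $n\ge p^{H(n)}$ gives $\frac{H(n)}{\log n}\le\frac1{\log p}$; combined with $\frac{H(n)}{\log n}\ge\varepsilon$ this forces $\log p\le 1/\varepsilon$, i.e.\ $p\le e^{1/\varepsilon}$. Thus only the finitely many primes $p\le e^{1/\varepsilon}$ can carry the maximal exponent, and every $n\in A_\varepsilon$ is divisible by $p^\alpha$ for one such prime $p$ with $\alpha=H(n)\ge\varepsilon\log n$. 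This is exactly where the $H$-case is genuinely weaker than the $h$-case: the large exponent pins down the prime but not the remaining cofactor, so $A_\varepsilon$ need not lie in any set $D(p_1,\dots,p_k)$, and an honest count of $A_\varepsilon(x)$ is unavoidable.

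To carry out that count I would fix a prime $p\le e^{1/\varepsilon}$ and an exponent $\alpha$, write $n=p^\alpha m$, and use the constraint $\alpha\ge\varepsilon\log n$ to bound the cofactor by $m\le e^{\alpha/\varepsilon}/p^\alpha=(e^{1/\varepsilon}/p)^\alpha$, while $n\le x$ gives $m\le x/p^\alpha$. Summing $\min\{(e^{1/\varepsilon}/p)^\alpha,\ x/p^\alpha\}$ over $\alpha\ge1$ splits at the crossover $\alpha=\varepsilon\log x$; both the increasing geometric tail below it and the decreasing geometric tail above it are controlled by their common boundary value, which is of order $x^{1-\varepsilon\log p}$. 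Keeping the finitely many primes $p\le e^{1/\varepsilon}$ and retaining the dominant one, $p=2$, yields $A_\varepsilon(x)=O(x^{1-\varepsilon\log 2})$, whence $\lambda(A_\varepsilon)\le 1-\varepsilon\log2<1$ and $A_\varepsilon\in\mathcal I_{<1}$. The main obstacle is precisely this counting step: one must exploit the coupling $\alpha\ge\varepsilon\log n$ (equivalently $n\le e^{\alpha/\varepsilon}$) to keep the exponent strictly below $1$, since discarding it and merely counting multiples of $p^\alpha$ up to $x$ loses the decisive saving and can push the exponent past $1$ for small $\varepsilon$.
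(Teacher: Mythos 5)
Your proof is correct, but it takes a genuinely different route from the paper's. The paper writes each $n$ uniquely as $n=ab^2$ with $a$ squarefree, uses $H(n)\le H(b^2)+1$ and $H(b^2)\le \frac{\log b^2}{\log 2}$ to show that $n\in A_\varepsilon$ forces $a\le 2n^{1-\varepsilon\log 2}$, and then counts pairs $(a,b)$ with $ab^2\le x$ via $\sum_a\sqrt{x/a}$, obtaining $A_\varepsilon(x)\le 2\sqrt{2}\,x^{1-\varepsilon\frac{\log 2}{2}}$. You instead exploit the prime $p$ carrying the maximal exponent: $n\ge p^{H(n)}$ together with $H(n)\ge\varepsilon\log n$ forces $p\le e^{1/\varepsilon}$, and for each of the finitely many admissible pairs $(p,\alpha)$ you count multiples of $p^\alpha$ subject to the coupling $n\le e^{\alpha/\varepsilon}$, summing two geometric tails that meet at $\alpha=\varepsilon\log x$ with boundary value $x^{1-\varepsilon\log p}$. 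Both arguments finish with the same criterion, Theorem \ref{tAxkisebbq-d} with $q=1$. Your decomposition is in fact closer in spirit to the paper's proof of the subsequent theorem on $a_p(n)$ (where $n=p^iu$, $p\nmid u$, is used), extended by the observation that the maximal-exponent prime is bounded in terms of $\varepsilon$; it also yields a slightly sharper exponent, $1-\varepsilon\log 2$ instead of the paper's $1-\varepsilon\frac{\log 2}{2}$. One detail to tighten: the increasing tail has ratio $e^{1/\varepsilon}/p$, which for the largest admissible primes can be arbitrarily close to (or even equal to) $1$, so there the bound by a constant multiple of the boundary value fails; one should instead bound that block by the number of terms times the largest term, i.e.\ by $\varepsilon\log x\cdot x^{1-\varepsilon\log p}$. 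The extra logarithmic factor is harmless, since Theorem \ref{tAxkisebbq-d} only requires $A_\varepsilon(x)/x^{1-\delta}\to 0$ for a single $\delta>0$, e.g.\ any $\delta<\varepsilon\log 2$.
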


\begin{proof}
	
	Let $0<\varepsilon <\frac{1}{\log 2}$. Then, according to \eqref{Ae}, we have 
	$$A_\varepsilon=\big\{n\in \NN: \frac{H(n)}{\log n}\geq\varepsilon\big\}.$$ We will show that $A_\varepsilon\in \mathcal I_{<1}$: every positive integer $n$ can be uniquely represented as $n=ab^{2}$, where $a$ is a
	square-free number. Hence $H(a)=1$ and $H(n)\in \{H(b^2), H(b^2)+1\}$. For any $n\in\NN$ we have $n=p_1^{a_1}\cdots p_k^{a_k} \geq 2^{H(n)}$ and from this
\[
H(n)\leq \frac{\log n}{\log 2}.
\]
If $n\in A_\varepsilon$ then for $n=ab^2$ we get
\[
\log n=\log(ab^2)\leq \frac{H(ab^2)}{\varepsilon}\leq \frac{H(b^2)+1}{\varepsilon}\leq \frac{\log b^2}{\varepsilon\log 2}+\frac{1}{\varepsilon},
\]
thus,
\[
A_\varepsilon\subseteq B=\Big\{n\in\NN :  n=ab^2,\  \log ab^2 \leq \frac{\log b^2}{\varepsilon\log 2}+\frac{1}{\varepsilon},\ \  a,b\in\NN\Big\}.
\]
	Furthermore, if $n\in B$, then
\[
\log a\leq \frac{1-\varepsilon\log 2}{\varepsilon\log 2}\log b^2 +\frac{1}{\varepsilon},
\]
	which is equivalent to %\vspace{2mm}
\[\mathlarger{
a^\frac{\varepsilon\log 2}{1-\varepsilon\log 2}\leq b^2 e^\frac{\log 2}{1-\varepsilon\log 2} },\quad\textrm{and so }	
\mathlarger{	\quad a^\frac{1}{1-\varepsilon\log 2}\leq a b^2 e^{\frac{\log 2}{1-\varepsilon\log 2}}   },
\]%\vspace{2mm}
therefore,
\[
B=\left\{n\in\NN :  n=ab^2  \textrm{ and }  a\leq 2n^{1-\varepsilon\log 2} \right\}.
\]
	If $n\in B$, and $n=ab^2\leq x$ for $x\geq 2$, then $a\le 2x^{1-\varepsilon\log 2} $ and $b\leq\sqrt{\frac{x}{a}}$.
Consequently,
\begin{gather*}
B(x)\leq \sum_{a<2x^{1-\varepsilon\log 2}} \sqrt{\frac{x}{a}}=\sqrt{x}\sum_{a<2x^{1-\varepsilon\log 2}}\frac{1}{\sqrt{a}}\leq \sqrt{x} \bigg(1+\int_{1}^{ 2x^{1-\varepsilon\log 2}} \frac{1}{\sqrt{t}} \mathrm dt \bigg)
%\vspace{3mm}
\\
	\leq\sqrt{x}\Big(1+2\big(\sqrt{2x^{1-\varepsilon\log 2}}-1\big)\Big)\leq 2\sqrt{2} x^{1-\varepsilon\frac{\log 2}{2}},
\end{gather*}
	hence, for $x\geq 2$, we have
\[
A_\varepsilon (x)\leq 2\sqrt{2}  x^{1-\varepsilon\frac{\log 2}{2}}.
\]
Using  $q= 1$ and arbitrary $\delta\in(0,\varepsilon\frac{\log 2}{2})$ in Theorem \ref{tAxkisebbq-d}, the above estimate gives $A_\varepsilon\in \mathcal I_{<1}$.
 \qed
\end{proof}

The next result strengthens statement III:

\begin{Theorem}
	For any prime number $p$, we have
	\[
	\Ilim{\mathcal I_{<1}} (\log p)\frac{a_p (n)}{\log n}=0\,.
	\]
\end{Theorem}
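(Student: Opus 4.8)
The plan is to follow the template of the preceding theorem: fix $\varepsilon>0$, bound the counting function of the exceptional set
\[
A_\varepsilon=\Big\{n\in\NN:(\log p)\tfrac{a_p(n)}{\log n}\geq\varepsilon\Big\},
\]
and then invoke Theorem~\ref{tAxkisebbq-d} with $q=1$ to conclude $A_\varepsilon\in\mathcal I_{<1}$. Since $A_\varepsilon\subseteq A_{\varepsilon'}$ whenever $\varepsilon\ge\varepsilon'$, the hereditary property lets me restrict to $0<\varepsilon<1$.

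First I would translate the membership condition into a smallness statement about the $p$-free part of $n$. For $n\in A_\varepsilon$ the inequality $(\log p)\,a_p(n)\geq\varepsilon\log n$ holds, i.e. $a_p(n)\log p\geq\varepsilon\log n$. Writing $n=p^{a_p(n)}m$ with $p\nmid m$, so that $\log m=\log n-a_p(n)\log p$, this becomes $\log m\leq(1-\varepsilon)\log n$, that is $m\leq n^{1-\varepsilon}$. In particular every $n\in A_\varepsilon$ with $n\leq x$ has its $p$-free part bounded by $m\le x^{1-\varepsilon}$.

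Next comes the counting step. The assignment $n\mapsto(m,j)$ with $j=a_p(n)$ and $m=n/p^{j}$ is injective, and for a fixed $m$ coprime to $p$ the constraint $n=p^{j}m\leq x$ forces $0\le j\le\frac{\log x}{\log p}$, leaving at most $\frac{\log x}{\log p}+1$ admissible values of $j$. Discarding the lower bound on $j$ for an upper estimate, I obtain
\[
A_\varepsilon(x)\leq\#\Big\{(m,j):p\nmid m,\ m\leq x^{1-\varepsilon},\ 0\le j\le\tfrac{\log x}{\log p}\Big\}\leq x^{1-\varepsilon}\Big(\tfrac{\log x}{\log p}+1\Big).
\]
Finally, choosing any $\delta\in(0,\varepsilon)$, say $\delta=\varepsilon/2$, gives
\[
0\le\frac{A_\varepsilon(x)}{x^{1-\delta}}\leq x^{\delta-\varepsilon}\Big(\tfrac{\log x}{\log p}+1\Big)\longrightarrow 0\qquad(x\to\infty),
\]
because the negative power $x^{\delta-\varepsilon}$ dominates the logarithmic factor. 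By Theorem~\ref{tAxkisebbq-d} with $q=1$ this yields $A_\varepsilon\in\mathcal I_{<1}$, and as $\varepsilon>0$ was arbitrary the asserted $\mathcal I_{<1}$-convergence to $0$ follows.

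The only nonroutine ingredient is the counting estimate, and specifically the observation that the arithmetic condition $a_p(n)\ge\varepsilon\log_p n$ is exactly a smallness condition $m\le n^{1-\varepsilon}$ on the $p$-free part; once this is in hand, organizing the count as (choices of $m$)$\times$(choices of the exponent $j$) is the main point, and the limit computation is then immediate. Everything else is bookkeeping.
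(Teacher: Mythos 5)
Your proof is correct and takes essentially the same approach as the paper: both arguments isolate the $p$-free part of $n$ (the paper's $u$, your $m$), show that the condition $(\log p)\,a_p(n)\geq\varepsilon\log n$ forces this part to be at most $x^{1-\varepsilon}$, and count pairs (free part, exponent of $p$) to obtain the bound $A_\varepsilon(x)\leq x^{1-\varepsilon}\big(\tfrac{\log x}{\log p}+1\big)$. The only cosmetic difference is the final step, where the paper applies Theorem~\ref{tAxqd} with $q=1-\varepsilon$ and then the inclusion $\mathcal I_{\leq 1-\varepsilon}\subset\mathcal I_{<1}$ from Theorem~\ref{beagyazas}, while you invoke Theorem~\ref{tAxkisebbq-d} directly with $q=1$ and $\delta=\varepsilon/2$; both conclusions are equally valid.
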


\begin{proof}
Let $0<\varepsilon <1$. Then, according to \eqref{Ae}, we have  $$A_\varepsilon =\{n>1: (\log p)\frac{a_p (n)}{\log n}\geq \varepsilon \}.$$ We have
\begin{gather*}
A_\varepsilon =\bigcup_{i=0}^{\infty} A_\varepsilon^{i},\quad\text{where}\\
A_\varepsilon^{i} = \{n\in A_\varepsilon :  n=p^i u \quad\text{where} \  p\nmid u \} \  (i=0, 1,2 \dots).
\end{gather*}
Clearly, $A_\varepsilon^{i}\cap A_\varepsilon^{j} =\emptyset $ for $i\neq j$, and if $n\in A_\varepsilon^{i}$, then
\[
(\log p)\frac{a_p (n)}{\log n}=(\log p)\frac{i}{i\log p +\log u}\geq \varepsilon, \quad\text{thus, } u\leq p^{i(\frac{1-\varepsilon}{\varepsilon})}.
\]
This implies, in case $x\geq 2$, that
\[
A_\varepsilon^{i} (x)\leq \# \left\{u:u^{\frac{\varepsilon}{1-\varepsilon}}u\leq x \right\}=\# \left\{u:u^{\frac{1}{1-\varepsilon}}\leq x \right\}\leq x^{1-\varepsilon},
\]
hence,
\[
A_\varepsilon (x)=\sum_{i: p^i\leq x} A_\varepsilon ^{i} (x)\leq \frac{\log x}{\log p } x^{1-\varepsilon}.
\]
Using  $q= 1-\varepsilon$ in Theorem \ref{tAxqd} and using Theorem \ref{beagyazas}, the above estimate gives
\[
A_\varepsilon\in \mathcal I_{\le 1-\varepsilon} \subset \mathcal I_{<1}.
\]
\qed
\end{proof}

The statements IV., V., VI. are consequences of the following:
\begin{Theorem}
	We have
\begin{itemize}
\item[i)]  $\Ilim{\mathcal I_{\leq \frac12}}  \gamma (n) = 1.$	
\item[ii)]  $\Ilim{\mathcal I_{\leq \frac12}} \tau(n) = 1.$
\item[iii)]  $\Ilim{\mathcal I_{\leq \frac12}}  N(n) = 2.$
\end{itemize}
\end{Theorem}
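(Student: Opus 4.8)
The plan is to treat all three parts uniformly through the $\mathcal I_{\le 1/2}$-convergence criterion, specialized to $q=\frac12$. By the Corollary following Theorem~\ref{tAxqd}, to establish $\Ilim{\mathcal I_{\le 1/2}} x_n = L$ it suffices to show that for each $\varepsilon>0$ the exceptional set $A_\varepsilon=\{n:|x_n-L|\ge\varepsilon\}$ satisfies $A_\varepsilon(x)/x^{1/2+\delta}\to0$ for every $\delta>0$; a bound of the form $A_\varepsilon(x)=O(x^{1/2})$ is more than enough, since then $A_\varepsilon(x)/x^{1/2+\delta}=O(x^{-\delta})\to0$. Because $\gamma$, $\tau$, $N$ are integer-valued, $A_\varepsilon$ decreases with $\varepsilon$ and, for all small $\varepsilon>0$, equals one fixed set (up to finitely many points); as $\mathcal I_{\le 1/2}$ is hereditary, it is enough to bound that single set in each part.

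First I would identify these exceptional sets. For (i), $\gamma(n)=1$ precisely when $n$ is not a perfect power, the representation $n=n^1$ being then the only one, so for $0<\varepsilon<1$ and $n\ge2$ we have $A_\varepsilon=\{n: n=m^k,\ m,k\ge2\}$. For (ii), the sole representation of a non-power contributes exponent $1$, whereas a perfect power has $\tau(n)\ge3$ (the smallest case $4=2^2$ gives $\tau(4)=1+2=3$); hence $A_\varepsilon$ is again exactly the set of perfect powers for $0<\varepsilon\le2$. For (iii), every $n\ge3$ occurs at least twice as $\binom n1=\binom n{n-1}$, and $N(n)>2$ exactly when $n=\binom mk$ for some $2\le k\le m-2$; thus, apart from a finite set of small $n$ where $N(n)\neq2$, the set $A_\varepsilon$ equals $\{n: n=\binom mk,\ 2\le k\le m/2\}$.

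The counting reduces to two estimates. For the perfect powers, $A_\varepsilon(x)\le\sum_{2\le k\le\log_2 x}x^{1/k}\le x^{1/2}+x^{1/3}\log_2 x=O(x^{1/2})$, which settles (i) and (ii). For the Pascal set in (iii) I would split by $k$: the values $\binom m2=m(m-1)/2$ are strictly increasing in $m$ and contribute $\sim\sqrt{2x}$ distinct integers up to $x$, while for $k\ge3$ the elementary inequality $\binom mk\ge(m/k)^k$ forces $m\le k\,x^{1/k}$, so the number of admissible pairs $(m,k)$, and hence of distinct values, is at most $\sum_{3\le k\le\log_2 x}k\,x^{1/k}\le x^{1/3}\sum_{3\le k\le\log_2 x}k=O\!\big(x^{1/3}(\log x)^2\big)$. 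Adding the two ranges gives $A_\varepsilon(x)=O(x^{1/2})$ here as well.

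With these bounds in hand, the Corollary after Theorem~\ref{tAxqd} yields $A_\varepsilon\in\mathcal I_{\le1/2}$ in every case, and since off $A_\varepsilon$ the sequences equal their claimed limits $1$, $1$, $2$, the three $\mathcal I_{\le1/2}$-limits follow. I expect the main obstacle to be the $k\ge3$ part of (iii): a crude pair count over all $m\le Cx^{1/3}$ and all $k\le m/2$ would only give $O(x^{2/3})$, too weak to conclude, so the argument genuinely needs the sharper per-$k$ restriction $m\le k\,x^{1/k}$ coming from $\binom mk\ge(m/k)^k$ to push this contribution below $x^{1/2}$. Parts (i) and (ii) are routine once the exceptional set is recognized as the perfect powers.
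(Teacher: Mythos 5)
Your proposal is correct, and for parts i) and ii) it follows essentially the same route as the paper: the exceptional set is (contained in) the set of perfect powers $\bigcup_{k\ge 2}\{n^k: n\ge 2\}$, whose counting function is bounded by $\sum_{2\le k\le \log_2 x} x^{1/k}=O(x^{1/2}\log x)$, and then Theorem \ref{tAxqd} (equivalently its Corollary) with $q=\frac12$ finishes the argument; the paper's bound $\sqrt{x}\,\frac{\log x}{\log 2}$ is slightly cruder than your $x^{1/2}+x^{1/3}\log_2 x$, but both suffice since any $O(x^{1/2+o(1)})$ bound beats $x^{1/2+\delta}$. The genuine difference is in part iii). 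The paper does not prove the bound on $M=\{n:N(n)>2\}$ at all: it simply cites Abbott--Erd\H{o}s--Hanson \cite{1} for $M(x)=O(\sqrt{x})$ and applies Theorem \ref{tAxqd}. You instead reprove this bound elementarily, splitting the repeated binomial coefficients $\binom{m}{k}$, $2\le k\le m/2$, into the $k=2$ range (about $\sqrt{2x}$ values up to $x$) and the $k\ge 3$ range, where $\binom{m}{k}\ge (m/k)^k$ forces $m\le k\,x^{1/k}$ and $k\le \log_2 x$, giving $O\bigl(x^{1/3}(\log x)^2\bigr)$ pairs; your observation that a naive count over all $m\le Cx^{1/3}$, $k\le m/2$ would only give $O(x^{2/3})$ is apt, and your per-$k$ restriction is exactly what makes the elementary argument work. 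The paper's citation is shorter and defers to the literature; your version makes the theorem self-contained and shows the required counting estimate is entirely elementary, at the cost of about a paragraph of extra work. Both are valid proofs.
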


\begin{proof}i) Let $0<\varepsilon <1$. Then, according to \eqref{Ae}, we have   $A_\varepsilon =\{n\in\NN: |\gamma(n)-1|\geq \varepsilon \}$. Clearly,
\[
A_\varepsilon \subseteq H=\big\{a^b: a, b\in\NN\rsetminus\{1\}\big\}=\bigcup_{k=2}^\infty \big\{n^k:n=2,3,\dots\big\}.
\]
Given some $x\in\NN$, $x\geq 2^2$, there is a $k\in\NN\rsetminus\{1\}$ with  $2^k\leq x <2^{k+1}$. Then $k\leq \frac{\log x}{\log 2}$, and
\[
H(x)\leq \sum_{n=2}^{k} \sqrt[n]{x}\leq \sqrt{x}  \frac{\log x}{\log 2},
\]
thus, for all $x\geq 4$,
\[
A_\varepsilon (x)\leq \frac{\log x}{\log 2} x^{\frac{1}{2}}.
\]
For $q=\frac{1}{2}$ in Theorem \ref{tAxqd}, we get $A_\varepsilon\in \mathcal I_{\leq \frac12 }$.

ii) Similar to i).

iii) Let $0<\varepsilon <1$. Then, according to \eqref{Ae}, we have  $A_\varepsilon =\{n\in\NN: |N(n)-2|\geq \varepsilon \}$. If we take $H=\{1,2\}\cup M$, where $M=\{n\in\NN : N(n)> 2\}$, then $A_\varepsilon\subset H$. It has been proved in \cite{1}, that $M(x)=O(\sqrt{x})$, thus, there is a $c>0$ so that for all $x\geq 2$,
\[
A_\varepsilon (x)\leq H(x)\leq cx^{\frac{1}{2}}.
\]
It now follows, by Theorem \ref{tAxqd}, that $A_\varepsilon\in \mathcal I_{\leq \frac12 }$. \qed
\end{proof}

\begin{Remark} We note, that the set  $\mathcal I_d$ containing all subsets of $\NN$ with zero asymptotic density forms an admissible ideal. The corresponding $\mathcal I_d$\nobreakdash-convergence is the wellknown statistical convergence. The following results were proved in \cite{16} and \cite{15}:
\[
\Ilim{\mathcal I_d} \frac{\omega(n)}{\log \log n}
=\Ilim{\mathcal I_d} \frac{\Omega(n)}{\log \log n}
=1, \]
\[
\Ilim{\mathcal I_d} \frac{\log\log f(n)}{\log \log n}
=\Ilim{\mathcal I_d} \frac{\log\log f^*(n)}{\log \log n}
=1+\log 2.
\]	
We note, that $\mathcal I_c^{(1)}\subsetneq \mathcal I_d$.

	 If $\Ilim{\Icq}  x_n =L$ is false for every $0<q\leq 1$, then $(x_n)$ does not $\Iq$\nobreakdash-converge for any $q$, so  $A_\varepsilon =\{n\in\NN : |x_n - L|\geq \varepsilon\}\notin \Iq$ whenever  $0<q\leq 1$; thus, $\lambda(A_\varepsilon) = 1$ is the only option. Then by VII. and  VIII. it follows that for all $\varepsilon >0$ and for every $n $,  $a_n\in\{\omega (n), \Omega(n)\}$, and  $b_n\in\{f(n), f^*(n)\}$ we have
\begin{itemize}
	\item[i)]  $\lambda\Big(\big\{n\in\NN : \big|\frac{a_n}{\log\log n}-1\big|\geq \varepsilon\big\}\Big)=1 ,$	
	\item[ii)]  $\lambda\Big(\big\{n\in\NN : \big|\frac{\log\log b_n}{\log\log n}-(1+\log 2)\big|\geq \varepsilon\big\}\Big)=1.$
	\end{itemize}
As a consequence, say of i) for $a_n =\omega (n)$, we have that if
\[
\Big\{n\in\NN : \Big|\frac{\omega(n)}{\log\log n}-1\Big|\geq \varepsilon\Big\} = \{n_1<n_2 <\dots <n_k<\dots\},
\]
then
\[
\limsup_{k\to\infty} \frac{\log k}{\log n_k} =1.
\]
\end{Remark}

\end{document}